\newtheorem*{thm}{Theorem}
\newtheorem*{lemma}{Lemma}
\newtheorem*{corollary}{Corollary}
\begin{document}

\title[]{An Inequality Characterizing Convex Domains}

\author[]{Stefan Steinerberger}
\address{Department of Mathematics, University of Washington, Seattle, WA 98195, USA} \email{steinerb@uw.edu}

\keywords{Convex domains, Interaction Energy, Crofton Formula}
\subjclass[2010]{52A05, 52A40, 53C65} 
\thanks{S.S. is supported by the NSF (DMS-2123224) and the Alfred P. Sloan Foundation.}

\begin{abstract} A property of smooth convex domains $\Omega \subset \mathbb{R}^n$ is that if two points on the
boundary $x, y \in \partial \Omega$ are close to each other, then their normal vectors $n(x), n(y)$ point
roughly in the same direction and this direction is almost orthogonal to $x-y$ (for `nearby' $x$ and $y$). 
 We prove there exists a constant $c_n > 0$ such that 
if $\Omega  \subset \mathbb{R}^n$ is a bounded domain with $C^1-$boundary $\partial \Omega$, then
$$ \int_{\partial \Omega \times \partial \Omega}  \frac{\left|\left\langle n(x), y - x \right\rangle  \left\langle  y - x, n(y) \right\rangle   \right| }{\|x - y\|^{n+1}}~d \sigma(x) d\sigma(y) \geq c_n |\partial \Omega|$$
and equality occurs if and only if the domain $\Omega$ is convex. 
\end{abstract}
\maketitle

\section{Introduction and Result}
Let $\Omega \subset \mathbb{R}^n$ be a bounded domain with $C^1-$boundary. The normal vectors $n(x), n(y)$ of two elements of the boundary, $x,y \in \partial \Omega$,will point roughly in the same direction which is roughly orthogonal to $y-x$ if $x$ and $y$ are close. In regions of
large curvature the normal vector changes quickly but convex domains whose boundary has regions with large curvature are `flatter' in
other regions and it might all average out in the end. We prove a quantitative version of this notion.

\begin{thm} There exists $c_n > 0$ so that for any bounded $\Omega \subset \mathbb{R}^n$ with $C^1-$boundary
$$ \int_{\partial \Omega \times \partial \Omega}  \frac{\left|\left\langle n(x), y - x \right\rangle  \left\langle  y - x, n(y) \right\rangle   \right| }{\|x -y\|^{n+1}}~d \sigma(x) d\sigma(y) \geq c_n |\partial \Omega|$$
with equality if and only if the domain $\Omega$ is convex.
\end{thm}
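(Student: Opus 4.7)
My plan is to attack this via an integral-geometric change of variables: pass to spherical coordinates centered at one boundary point and use the classical surface coarea identity. Fix $x \in \partial\Omega$ and parametrize $y = x + rv$ with $r > 0$, $v \in S^{n-1}$. For any sufficiently regular $g$ on $\partial\Omega$ one has
\[
\int_{\partial\Omega} g(y) d\sigma(y) = \int_{S^{n-1}} \sum_{\substack{r > 0 \\ x + rv \in \partial\Omega}} \frac{r^{n-1} g(x + rv)}{|\langle n(x+rv), v\rangle|} dv,
\]
since radial projection from $x$ relates Lebesgue measure on $\mathbb{R}^n$ to surface measure on $\partial\Omega$ through the usual $|\cos\theta|$ factor. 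The hope is that this identity exactly matches the homogeneity of the kernel in the theorem.

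Apply this with $g(y) = |\langle n(x), y-x\rangle \langle n(y), y-x\rangle|/\|x-y\|^{n+1}$. Since $y - x = rv$, each inner product in the numerator gives a factor of $r$, the denominator contributes $r^{-(n+1)}$, the Jacobian contributes $r^{n-1}$, and the $|\langle n(y), v\rangle|$ in the coarea denominator exactly cancels the one produced by $\langle n(y), y-x\rangle$. All powers of $r$ collapse and the summand reduces to $|\langle n(x), v\rangle|$, independent of $r$ and of $n(y)$. Integrating in $x$, and letting $N^+(x,v)$ denote the number of intersections of the ray $\{x+rv:r>0\}$ with $\partial\Omega$, we arrive at the clean identity
\[
\int_{\partial\Omega \times \partial\Omega} \frac{|\langle n(x), y-x\rangle \langle n(y), y-x\rangle|}{\|x-y\|^{n+1}} d\sigma(x) d\sigma(y) = \int_{\partial\Omega} \int_{S^{n-1}} |\langle n(x), v\rangle| N^+(x, v) dv d\sigma(x).
\]

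Both the inequality and the equality case now follow from a single observation about $N^+$. Since $\Omega$ is bounded, any ray pointing inward from a boundary point must eventually exit $\Omega$, so $N^+(x, v) \geq 1$ whenever $\langle v, n(x)\rangle < 0$. Substituting this pointwise lower bound gives
\[
\int_{S^{n-1}} |\langle n(x), v\rangle| N^+(x, v) dv \geq \int_{\{v:\langle v, n(x)\rangle < 0\}} |\langle n(x), v\rangle| dv,
\]
and by rotational symmetry the right-hand side equals a universal constant $c_n > 0$ independent of $x$. Integrating in $x$ proves the theorem. Equality forces $N^+(x,v) = \mathbf{1}_{\{\langle v, n(x)\rangle < 0\}}$ for almost every $(x,v)$, i.e.\ every line crosses $\partial\Omega$ in at most two points, which is a classical characterization of convexity.

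The main obstacle I foresee is the rigorous bookkeeping around tangential directions under only a $C^1$ hypothesis: one must justify the coarea identity, treat the null set where $|\langle n(\cdot), v\rangle|$ vanishes (producing formal singularities that are in fact cancelled by the kernel), and verify that when $\Omega$ is not convex the set of $(x,v)$ with $N^+(x,v) \geq 3$ actually has positive measure. The last point follows by taking two interior points whose connecting segment leaves $\Omega$ and extending to a full line that must cross $\partial\Omega$ transversely at four or more points, but the perturbation argument stabilizing this into a positive-measure set of $(x,v)$ is the one step that deserves careful execution.
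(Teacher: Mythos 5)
Your argument is correct, and it takes a genuinely different route from the paper. The paper works on the space of all lines: it invokes the Crofton formula with the kinematic measure, uses that $n_{\ell}(\partial \Omega)$ is a.e.\ either $0$ or at least $2$ to get $\int_L n_{\ell}\,d\mu \leq \tfrac{1}{2}\int_L n_{\ell}^2\,d\mu$ (equality iff convex), and then evaluates the off-diagonal part of $n_{\ell}^2$ by decomposing $\partial\Omega$ into $\varepsilon$-patches and computing, probabilistically, the likelihood that a random line hits two given patches, with $o(1)$ errors as $\varepsilon \to 0$. You instead disintegrate: fixing $x \in \partial\Omega$ and applying the area formula to the radial projection $y \mapsto (y-x)/\|y-x\|$, whose Jacobian factor $|\langle n(y), v\rangle|/r^{n-1}$ is exactly cancelled by the kernel, you obtain the exact identity in which the inner integral equals $\int_{\mathbb{S}^{n-1}} |\langle n(x), v\rangle|\, N^+(x,v)\, dv$, and the pointwise bound $N^+(x,v) \geq \mathbf{1}_{\{\langle v, n(x)\rangle < 0\}}$ (a ray entering a bounded domain must leave it) gives the inequality with the same constant $c_n = \tfrac{1}{2}\int_{\mathbb{S}^{n-1}} |w_1|\, d\sigma(w)$. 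What your route buys: no $\varepsilon$-decomposition, no heuristic conditioning, an equality analysis reduced to a pointwise statement, and, for convex $\Omega$, the first identity of the paper's Corollary falls out immediately since then $N^+(x,\cdot) = \mathbf{1}_{\{\langle v, n(x)\rangle < 0\}}$ a.e.\ and the inner integral equals $c_n$ for every $x$. What the paper's route buys is the explicit connection to the Crofton formula and a template that also handles the Corollary's second identity for an interior hyperplane element. On the two steps you flag: the coarea identity is the Lipschitz area formula for the radial projection, and the set where $\langle n(y), y-x\rangle = 0$ causes no trouble because the kernel vanishes there; and in the equality case you do not need four intersection points --- if $\Omega$ is not convex there is $x_0 \in \partial\Omega$ whose tangent hyperplane is not supporting, i.e.\ some $z \in \Omega$ with $\langle z - x_0, n(x_0)\rangle > 0$, so $v_0 = (z-x_0)/\|z-x_0\|$ is strictly outward yet $N^+(x_0, v_0) \geq 1$, and by continuity of $n$ and openness of $\Omega$ this persists on a positive-measure set of $(x,v)$, contradicting equality; this is exactly the perturbation the paper makes in its Lemma, so your proof is at the same level of rigor there.
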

Integration is carried out with respect to the $(n-1)-$dimensional Hausdorff measure
and the size of the boundary $ |\partial \Omega|$ is measured the same way. Somewhat to our surprise,
we were unable to find this statement in the literature. It can be interpreted as a global conservation law
for convex domains or as a geometric functional with an extremely large set of minimizers (all convex domains). The requirement of the boundary $\partial \Omega$ being $C^1$
can presumably be somewhat relaxed.

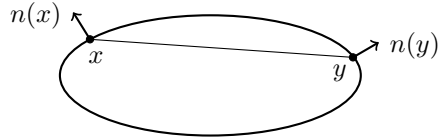
\begin{figure}[h!]
\begin{center}
\begin{tikzpicture}[scale=0.8]
\draw[thick]  (0,0) ellipse (2.5cm and 1cm);
\filldraw (-2,0.6) circle (0.06cm);
\filldraw (2.37,0.3) circle (0.06cm);
\draw (-2, 0.6) -- (2.37, 0.3);
\draw [thick, ->] (-2, 0.6) -- (-2.28, 1.05);
\draw [thick, ->] (2.37, 0.3) -- (2.8, 0.55);
\node at (-1.9, 0.3) {$x$};
\node at (2.15, 0.05) {$y$};
\node at (-2.9, 1) {$n(x)$};
\node at (3.4, 0.5) {$n(y)$};
\end{tikzpicture}
\end{center}
\caption{If $x$ and $y$ are close, then $n(x)$ and $n(y)$ are nearly orthogonal to $x-y$ unless $x$ and $y$ are far apart.}
\end{figure}

 If $\Omega$ is the unit ball in $\mathbb{R}^n$, then
$\partial \Omega = \mathbb{S}^{n-1}$ and for any $x, y \in \mathbb{S}^{n-1}$, we have $n(x) = x$ and
$ \|x-y\|^2 = 2 - 2\left\langle x, y \right\rangle$. This simplifies the expression since
$$ \left|\left\langle n(x), y - x \right\rangle  \left\langle  y - x, n(y) \right\rangle   \right| = (1 - \left\langle x,y \right\rangle)^2.$$
 Moreover, using rotational symmetry and
 $w = (1,0,0,\dots,0)$ for the north pole,
\begin{align*}
\int_{\mathbb{S}^{n-1} \times \mathbb{S}^{n-1}}  \frac{ (1 - \left\langle x, y \right\rangle)^2 }{\|y - x\|^{n+1}}~d \sigma(x) d\sigma(y) &=
 \int_{\mathbb{S}^{n-1} \times \mathbb{S}^{n-1}}  \frac{ (1 - \left\langle x, y \right\rangle)^2 }{(2-2\left\langle x, y\right\rangle)^{\frac{n+1}{2}}}~d \sigma(x) d\sigma(y) \\
 &= \frac{|\mathbb{S}^{n-1}|}{2^{\frac{n+1}{2}}} \int_{\mathbb{S}^{n-1} }  (1-\left\langle x, w\right\rangle)^{-\frac{n-3}{2}}~d \sigma(x) 
\end{align*}
which implies
\begin{align*}
 c_n = \frac{1}{2^{\frac{n+1}{2}}} \int_{\mathbb{S}^{n-1} }  (1-x_1)^{-\frac{n-3}{2}}~d \sigma(x)=  \frac{1}{2}   \int_{\mathbb{S}^{n-1}} \left| x_1 \right| d\sigma(x).
 \end{align*}
The constant has a simple form in low dimensions where $c_2 = 2$ and $c_3 = \pi$. 
Our proof can be best described as an application of Integral Geometry; we formulate and use a bilinear version of the Crofton formula. 
The proof tells us a little bit more: if the domain $\Omega$ is convex,  stronger statements can be made.
\begin{corollary} For any convex, bounded $\Omega \subset \mathbb{R}^n$ with $C^1-$boundary and all $x \in \partial \Omega$
$$ \int_{\partial \Omega}  \frac{\left|\left\langle n(x), y - x \right\rangle  \left\langle  y - x, n(y) \right\rangle   \right| }{\|x -y\|^{n+1}}~d\sigma(y)  = c_n.$$
If $x \in \Omega \setminus \partial \Omega$ and $w \in \mathbb{S}^{n-1}$ is an arbitrary unit vector, then
$$ \int_{\partial \Omega}  \frac{\left|\left\langle w, y - x \right\rangle  \left\langle  y - x, n(y) \right\rangle   \right| }{\|x -y\|^{n+1}}~d\sigma(y)  = 2 \cdot c_n.$$
\end{corollary}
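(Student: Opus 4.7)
The plan is to reduce both identities to the rotationally invariant integral $\int_{\mathbb{S}^{n-1}} |\langle \theta, w\rangle|\, d\sigma(\theta) = 2c_n$ by changing variables from $y \in \partial\Omega$ to polar coordinates centered at $x$. Convexity enters in a single, clean way: for $x \in \partial\Omega$, the map $y \mapsto \theta = (y-x)/\|y-x\|$ is a bijection from $\partial\Omega \setminus \{x\}$ onto the closed half-sphere $\{\theta \in \mathbb{S}^{n-1} : \langle \theta, n(x)\rangle \leq 0\}$; for $x$ in the interior, it is a bijection onto all of $\mathbb{S}^{n-1}$. Without convexity some rays from $x$ would exit and re-enter $\Omega$, and the identities would fail pointwise.

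The main computation is a polar Jacobian. Writing $y = x + r\theta$ and using the standard radial-projection formula for surfaces,
$$d\sigma(y) = \frac{r^{n-1}}{|\langle \theta, n(y)\rangle|}\, d\sigma(\theta).$$
Substituting $y-x = r\theta$ into the integrand, the numerator becomes $r^{2}\,|\langle n(x), \theta\rangle|\,|\langle \theta, n(y)\rangle|$ and the denominator is $r^{n+1}$; after multiplying by the Jacobian, both the powers of $r$ and the factor $|\langle \theta, n(y)\rangle|$ drop out, leaving
$$\frac{|\langle n(x), y-x\rangle|\,|\langle y-x, n(y)\rangle|}{\|x-y\|^{n+1}}\, d\sigma(y) = |\langle n(x), \theta\rangle|\, d\sigma(\theta).$$
The identical cancellation for the second integral, with $n(x)$ replaced by a fixed $w \in \mathbb{S}^{n-1}$, produces $|\langle w, \theta\rangle|\, d\sigma(\theta)$.

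For the boundary statement I would integrate over the inward-pointing hemisphere and use that $|\langle n(x), \theta\rangle|$ is even under $\theta \mapsto -\theta$, so the hemisphere integral equals $\tfrac12 \int_{\mathbb{S}^{n-1}}|\langle n(x), \theta\rangle|\,d\sigma(\theta) = \tfrac12 \int_{\mathbb{S}^{n-1}} |x_1|\, d\sigma(x) = c_n$, using the formula for $c_n$ derived in the introduction. For $x$ in the interior, integration over the full sphere gives $\int_{\mathbb{S}^{n-1}}|\langle w, \theta\rangle|\,d\sigma(\theta) = 2c_n$ directly.

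The one delicate point is making the Jacobian identity precise at $x \in \partial\Omega$: the radial projection has a mild singularity as $y \to x$, and the equator of the hemisphere (directions tangent to $\partial\Omega$ at $x$) needs attention. Both are handled by excising a small geodesic neighborhood of $x$ in $\partial\Omega$, applying the change of variables on the complement, and passing to the limit; the $C^{1}$ regularity of $\partial\Omega$ ensures the integrand $|\langle n(x),\theta\rangle|$ on the boundary set vanishes and that the excised contribution goes to zero.
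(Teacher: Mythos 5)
Your proof is correct, but it takes a genuinely different route from the paper. The paper deduces the Corollary by rerunning the bilinear Crofton/kinematic argument of the Theorem: it places an infinitesimal hyperplane segment at $x$ (on the boundary, resp.\ in the interior with normal $w$), uses convexity to say that almost every line meeting this segment meets $\partial \Omega$ in exactly one (resp.\ two) further points, identifies the resulting line-counting integral with the kernel integral as in the proof of the Theorem, and finally pins down the constant in the first identity by integrating over $\partial\Omega$ and invoking the Theorem. You instead perform a direct radial-projection change of variables $y \mapsto \theta = (y-x)/\|y-x\|$ with Jacobian $d\sigma(y) = r^{n-1}|\langle \theta, n(y)\rangle|^{-1} d\sigma(\theta)$, under which the kernel collapses to $|\langle n(x),\theta\rangle|$ (resp.\ $|\langle w,\theta\rangle|$), giving the half-sphere integral $c_n$ and the full-sphere integral $2c_n$ outright. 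This is more elementary and more self-contained: it bypasses the kinematic measure, the $\varepsilon$-discretization and the $o(1)$ bookkeeping, and it produces the constants directly rather than via the Theorem. Two small points of care, which your excision argument essentially covers: for $x \in \partial\Omega$ the map is in general a bijection only onto the \emph{open} half-sphere $\{\langle \theta, n(x)\rangle < 0\}$ (if $\partial\Omega$ contains a flat piece through $x$, equatorial directions are hit by whole segments of boundary points, and for strictly convex bodies they are not hit at all), but the integrand vanishes identically on the set $\{\,y : \langle n(x), y-x\rangle = 0\,\}$, so restricting to the open half-sphere loses nothing; and on that open set, as well as for interior $x$, convexity forces strict transversality $\langle n(y), \theta\rangle > 0$, so the Jacobian factor in the denominator never degenerates and the change of variables is legitimate, with the finiteness of the original integral near $y = x$ following from the finiteness of the spherical integral.
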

We note that $c_n$ is the exact same constant as above (which can be seen by integrating the first equation over $\partial \Omega$ with respect to $d \sigma(x)$). Some of the conditions can presumably be relaxed a little. The Crofton formula is known to hold in a very general setting (see Santal\'o \cite{santa2}). It is an interesting question whether any of these results could be generalized to more abstract settings.

\section{Proof of the Theorem}
 The Crofton formula in $\mathbb{R}^n$ (see, for example, Santal\'o \cite{santa}) states that for rectifiable $S$ of co-dimension 1 one has
$$ |S| = \alpha_n \int_{L} n_{\ell}(S) d\mu(\ell),$$
where the integral runs over the space of all oriented lines in $\mathbb{R}^n$ with respect to the kinematic measure $\mu$ (which is invariant under all
rigid motions of $\mathbb{R}^n$) and $n_{\ell}(S)$ is the number of times the line $\ell$ intersects the surface $S$. The constant
$\alpha_n$ can be computed by picking $S = \mathbb{S}^{n-1}$ but will not be needed for our argument.

\begin{lemma}
Let $\Omega \subset \mathbb{R}^n$ be a bounded domain with $C^1-$boundary. Almost all lines $\ell$ (with respect to the kinematic measure) intersect the boundary $\partial \Omega$ either never or in exactly two points if and only if $\Omega$ is convex. \end{lemma}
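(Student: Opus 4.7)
The plan is to prove the two directions of the equivalence separately.

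For the ``if'' direction, assume $\Omega$ is convex. Any line $\ell$ meets the closed convex set $\overline{\Omega}$ in a convex subset of $\ell$, which — since $\Omega$ is bounded — must be either empty, a single point, or a bounded closed segment $[a,b]$ with $a\neq b$. In the segment case $\ell\cap\partial\Omega=\{a,b\}$ unless $\ell$ happens to be tangent to $\partial\Omega$ at one of the endpoints. I would finish by showing that tangent lines form a kinematic null set via a dimension count: they are parameterized by a contact point $x\in\partial\Omega$ (dimension $n-1$) together with a direction in the tangent hyperplane $T_x\partial\Omega$ modulo sign (dimension $n-2$), giving a $(2n-3)$-dimensional family inside the $(2n-2)$-dimensional space of all lines.

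For the ``only if'' direction I argue by contrapositive. Suppose $\Omega$ is not convex. Since $\partial\Omega$ is $C^1$, $\Omega=\operatorname{int}(\overline{\Omega})$, so $\overline{\Omega}$ is non-convex as well. Hence one can find $p_0,q_0\in\overline{\Omega}$ and a point $r_0\in[p_0,q_0]$ lying in the open complement $\mathbb{R}^n\setminus\overline{\Omega}$ — the segment joining two points of a non-convex closed set must cross through a relatively open gap. Because this exterior is open and $\Omega$ is dense in $\overline{\Omega}$ near $p_0,q_0$, slight perturbations yield an open set $\mathcal{U}\subset\Omega\times\Omega$ of pairs $(p,q)$ for which $[p,q]$ still meets $\mathbb{R}^n\setminus\overline{\Omega}$. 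For every such pair, $\ell_{p,q}\cap\Omega$ has at least two bounded connected components (one containing $p$, the other $q$), and the endpoints of each bounded interval sit in $\partial\Omega$; so $\ell_{p,q}$ crosses $\partial\Omega$ in at least four points.

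To conclude, the assignment $(p,q)\mapsto\ell_{p,q}$ on pairs with $p\neq q$ is a submersion onto the $(2n-2)$-dimensional space of lines (its fibers are the two-dimensional orbits of rescaling along the line), hence an open map. Therefore the image of $\mathcal{U}$ has positive kinematic measure, contradicting the hypothesis that almost every line meets $\partial\Omega$ in at most two points. The main subtlety — and the place where $C^1$-regularity is genuinely used — is the perturbation step: if the witnessing segment only grazed $\partial\Omega$ tangentially without entering the open exterior, a small perturbation could destroy the witness. This is circumvented by phrasing non-convexity in terms of $\overline{\Omega}$ (legitimate because $\Omega=\operatorname{int}(\overline{\Omega})$) and then invoking openness of $\mathbb{R}^n\setminus\overline{\Omega}$ to secure a robust interior gap $r_0$.
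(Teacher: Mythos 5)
Your proposal is correct in substance, but in the non-convex direction it takes a genuinely different route from the paper. The paper localizes the failure of convexity at a single boundary point: it picks $x\in\partial\Omega$ whose tangent (supporting) hyperplane fails to keep all of $\Omega$ on one side, chooses $y\in\Omega$ on the wrong side, notes that the line through $x$ and $y$ meets $\partial\Omega$ at least four times, and uses $C^1$-regularity to perturb this one line into a set of lines of positive kinematic measure. You instead use the segment definition of convexity globally: a chord of $\overline{\Omega}$ passing through the open exterior, with endpoints pushed into $\Omega$, produces a nonempty open set $\mathcal{U}\subset\Omega\times\Omega$ of witnesses, each giving at least two bounded components of $\ell_{p,q}\cap\Omega$ and hence at least four boundary intersections (the exterior point $r$ strictly between the two components even forces the four interval endpoints to be distinct); pushing $\mathcal{U}$ forward under the submersion $(p,q)\mapsto\ell_{p,q}$ gives an open, hence positive-measure, family of bad lines. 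This is arguably cleaner: it uses $C^1$ only through $\Omega=\operatorname{int}(\overline{\Omega})$ and avoids the paper's somewhat delicate perturbation of a line passing through a point of tangency, while in the convex direction you supply the detail the paper dismisses as ``immediate.'' Two points to tighten. First, in the chord case the exceptional situation is not only tangency at an endpoint: the entire chord may lie in $\partial\Omega$ (the line sits inside a supporting hyperplane), and the single-point case $\ell\cap\overline{\Omega}=\{a\}$ also yields exactly one boundary point and must be excluded; for a $C^1$ boundary both are again tangent lines, since a line through $a\in\partial\Omega$ transversal to the tangent hyperplane enters $\Omega$, so your null-set claim covers them, but this should be said. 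Second, the dimension count by itself does not prove nullity, because a merely continuous image of a $(2n-3)$-dimensional set can have positive measure; it is rescued here by a Fubini argument: for each fixed $x\in\partial\Omega$ the tangent directions form a great subsphere of measure zero in $\mathbb{S}^{n-1}$, so the incidence set $\{(x,v):\langle v,n(x)\rangle=0\}$ is null in the $(2n-2)$-dimensional space $\partial\Omega\times\mathbb{S}^{n-1}$, and the locally Lipschitz map $(x,v)\mapsto(\text{line through }x\text{ with direction }v)$ into the $(2n-2)$-dimensional line space carries null sets to null sets. With these two remarks added, your proof is complete and somewhat more detailed than the paper's own.
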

\begin{proof}
If $\Omega$ is convex, the result is immediate. Suppose now $\Omega$ is not convex; then there exists a boundary point $x \in \partial \Omega$
such that the supporting hyperplane does not contain all of the domain on one side (note that because the boundary of $\Omega$ is $C^1$, the
supporting hyperplane is unique).
In particular, there exists $y \in \Omega$ that is on the other side of the supporting hyperplane. The line $\ell$ that goes through $x$ and $y$ satisfies
$n_{\ell}(\partial \Omega) \geq 4$, moreover, this is stable under some perturbations of the line (and thus a set of kinematic measure larger than 0) because $\partial \Omega$ is $C^1$. \end{proof}

\begin{proof}[Proof of the Theorem]
We first note for almost all lines $\ell$ (with respect to the kinematic measure) the number of intersections $n_{\ell}(\partial \Omega)$ is either 0 or at least 2: if a line enters the domain, it also has to exit the domain (lines that are tangential to the boundary are a set of measure 0). This implies
$$ |\partial \Omega| = \alpha_n  \int_{L} n_{\ell}(\partial \Omega) ~d \mu(\ell) \leq \frac{\alpha_n}{2}  \int_{L} n_{\ell}(\partial \Omega)^2 ~d \mu(\ell)$$
with equality if and only if $\Omega$ is convex.
At this point we pick a small $\varepsilon > 0$ and decompose the boundary
$ \partial \Omega = \bigcup_i  \partial \Omega_i$ into small disjoint regions that have diameter $\leq \varepsilon \ll 1$ (and $\varepsilon$ will later tend to 0).
Naturally,
$$ n_{\ell}(\partial \Omega)^2 = \left[ n_{\ell}\left( \bigcup_i  \partial \Omega_i\right) \right]^2 =  \left[ \sum_i n_{\ell}\left(  \partial \Omega_i\right) \right]^2= \sum_{i,j} n_{\ell}( \partial \Omega_i) n_{\ell}(\partial \Omega_j)$$
We will now evaluate the integral over such a product. The diagonal terms $i=j$ behave
a little bit differently than the non-diagonal terms and we start with those. As $\varepsilon \rightarrow 0$, the fact that the 
boundary is $C^1$ implies that a `random' line will hit any such infinitesimal segment at most once and thus the Crofton formula implies
\begin{align*}
 \frac{\alpha_n}{2}  \int_{L} \sum_i n_{\ell}( \partial \Omega_i)^2  ~d \mu(\ell) &= (1+o(1))  \frac{\alpha_n}{2}  \int_{L} \sum_i n_{\ell}( \partial \Omega_i)  ~d \mu(\ell)\\ 
 &= (1+o(1))  \frac{\alpha_n}{2}  \int_{L} n_{\ell}( \partial \Omega)  ~d \mu(\ell) =  (1+o(1))  \frac{|\partial \Omega|}{2}
 \end{align*}
 which is nicely behaved as $\varepsilon \rightarrow 0$ (the error could be made quantitative in terms of the modulus of continuity of the normal vector). It remains to analyze the off-diagonal terms.
 Let us assume that $\partial \Omega_x \subset \partial \Omega$ is a small
segment centered around $x \in \partial \Omega$ and  $\partial \Omega_y \subset \partial \Omega$ is a small
segment centered around $y \in \partial \Omega$ and that both are scaled to have surface area $0 < \varepsilon \ll 1$. We can
also assume, because the surface is $C^1$ and we are allowed to take $\varepsilon$ arbitrarily small, that they are approximately given by hyperplanes (and, as above, the error is a lower order term coming from curvature). 
The quantity to be evaluated, 
$$ \int_{L} n_{\ell}( \partial \Omega_x) n_{\ell}(\partial \Omega_y) d\mu(\ell), \qquad \mbox{can be seen in probabilistic terms}$$
as the likelihood that a `random' line (random as induced by the kinematic measure $\mu$) intersects
both $\partial \Omega_x$ and $\partial \Omega_y$. Appealing to the law of total probability
$$ \mathbb{P}\left(  n_{\ell}( \partial \Omega_x) n_{\ell}(\partial \Omega_y) = 1\right) = \mathbb{P}( n_{\ell}(\partial \Omega_y) = 1 \big|  n_{\ell}( \partial \Omega_x) =1) \cdot \mathbb{P}( n_{\ell}( \partial \Omega_x) =1).$$
The last quantity is easy to evaluate: by Crofton's formula
$$  \mathbb{P}( n_{\ell}( \partial \Omega_x) =1) = \frac{1}{\alpha_n} | \partial \Omega_x| = \frac{\varepsilon}{\alpha_n}.$$
It remains to compute the second term: the likelihood of a `random' line hitting $\partial \Omega_y$ provided that it has already hit $\partial \Omega_x$. For this purpose, we first consider what we can say about random lines that have hit $\partial \Omega_x$.  The distribution of $\partial \Omega_x \cap \ell$, provided it is not empty, is, to leading order, uniformly distributed over $\partial \Omega_x$ because $\partial \Omega_x$ is, to leading order, part of a hyperplane and the kinematic measure is translation-invariant.  In contrast, the direction $\phi$ of intersection (identified with unit vectors on $\mathbb{S}^{n-1}$) is not uniformly distributed: the likelihood is proportional to the size of the projection of $\Omega_x$ in direction of $\phi$ which is proportional to the inner product of $\phi$ with the normal vector $n(x)$. Hence the probability distribution of the direction of intersection $\phi$ of lines conditioned on hitting $\partial \Omega_x$ is given by
$$ \Psi(\phi) =   \frac{2 \left\langle n(x), \phi \right\rangle } {\int_{\mathbb{S}^{n-1}} \left|\left\langle w, n(x)\right\rangle \right| d\sigma(w) }, $$
where the factor $2$ comes from the fact that each line creates two directions of intersections. 
This allows us to perform a change of measure: we may assume that the lines are oriented uniformly at random provided that we later weigh the end result by $\Psi$.
If the lines are oriented in all directions uniformly, then it is easy to see the likelihood of hitting $\partial \Omega_y$ provided one has already hit $\partial \Omega_x$: it is simply proportional to the size of the projection of $\partial \Omega_y$ onto the sphere of radius $\|x-y\|$ centered at $x$. The projection shrinks
the area by a factor of $ \left| \left\langle n(y), (x-y)/\|x-y\| \right\rangle \right|$. The relative likelihood is then proprtional to
$$P= \Psi\left( \frac{x-y}{\|x-y\|}\right)   \left| \left\langle n(y), \frac{x-y}{\|x-y\|} \right\rangle \right| \frac{\varepsilon}{\|x-y\|^{n-1}}.$$
Plugging in the definition of $\Psi$ this simplifies to
$$ P =    2\left(   \int_{\mathbb{S}^{n-1}} \left|\left\langle w, n(x)\right\rangle\right| d\sigma(w)\right)^{-1}  \frac{\left| \left\langle n(x), x-y\right\rangle \left\langle x-y, n(y) \right\rangle \right|}{\|x-y\|^{n+1}}  \varepsilon,$$
where we note that, by rotational symmetry of the sphere, the first integral is actually independent of the direction in which $n(x)$ is pointing.
Altogether,
\begin{align*}
 |\partial \Omega|& = \alpha_n \int_{L} n_{\ell}(\partial \Omega) ~d \mu(\ell) \leq  \frac{\alpha_n}{2} \int_{L} n_{\ell}(\partial \Omega)^2 ~d \mu(\ell) \\
 &= \frac{\alpha_n}{2} \int_{L}  \sum_i n_{\ell}(\partial \Omega_i) ~d \mu(\ell) + \frac{\alpha_n}{2} \int_{L}  \sum_{i \neq j} n_{\ell}(\partial \Omega_i) n_{\ell}(\partial \Omega_j)  ~d \mu(\ell).
\end{align*}
The inequality is an equation if and only if $\Omega$ is convex. As already discussed above, the first term tends to $|\Omega|/2$ as $\varepsilon \rightarrow 0$. Thus, for arbitrary $a \in \mathbb{S}^{n-1}$,
\begin{align*}
\frac{ |\partial \Omega|}{2} &\leq \lim_{\varepsilon \rightarrow 0} \frac{\alpha_n}{2} \int_{L}  \sum_{i \neq j} n_{\ell}(\partial \Omega_i) n_{\ell}(\partial \Omega_j)  ~d \mu(\ell) \\
&=\left( \int_{\mathbb{S}^{n-1}} \left|\left\langle w, a\right\rangle\right| d\sigma(w)\right)^{-1} \int_{\partial \Omega \times \partial \Omega}  \frac{\left|\left\langle n(x), y - x \right\rangle  \left\langle  y - x, n(y) \right\rangle   \right| }{\|y - x\|^{n+1}}~d \sigma(x) d\sigma(y).
\end{align*}
This establishes the inequality with constant
$$ c_n =\frac{1}{2} \int_{\mathbb{S}^{n-1}} \left|\left\langle w, n\right\rangle\right| d\sigma(w) = \frac{1}{2}  \int_{\mathbb{S}^{n-1}} \left| w_1 \right| d\sigma(w).$$
\end{proof}

\section{Proof of the Corollary}
\begin{proof} The proof of the Corollary is using the same computation as the proof of the Theorem in two additional settings leading to the two identities. Let $\Omega$ be convex and let $x \in \partial \Omega$. We start by considering an infinitesimal hyperplane segment $\partial \Omega_x$ centered around $x$. By convexity of $\Omega$, almost all lines intersecting $\partial \Omega_x$ will intersect $\partial \Omega$ in exactly one other point. This implies, as the size of $\partial \Omega_x$ tends to 0, that
$$ \int_{L} n_{\ell}( \partial \Omega_x) n_{\ell}(\partial \Omega \setminus \partial \Omega_x) d\mu(\ell) = (1+o(1)) \cdot \mu(\partial \Omega_x).$$
At the same time, by Crofton's formula, the likelihood of a line hitting $\partial \Omega_x$ is only a function of the surface area of $\partial \Omega_x$ and independent of everything else. Finally, using linearity, we can decompose $\partial \Omega \setminus \partial \Omega_x$ into small hyperplane segments and use the computation above to deduce that
$$ \int_{\partial \Omega}  \frac{\left|\left\langle n(x), y - x \right\rangle  \left\langle  y - x, n(y) \right\rangle   \right| }{\|x -y\|^{n+1}}~d\sigma(y)  = \mbox{const}.$$
Integrating once more and applying the Theorem immediately implies that the constant has to be $c_n$. As for the second part, we can consider an infinitesimal hyperplane segment $H_x$ centered at $x \in \Omega \setminus \partial \Omega$ with normal direction given by $w \in \mathbb{S}^{n-1}$. Every line hitting $H_x$ intersects $\partial \Omega$ in exactly two points and thus
$$ \int_{L} n_{\ell}( H_x) n_{\ell}(\partial \Omega) d\mu(\ell) = 2 \cdot \mu(H_x).$$
By Crofton's formula, the right-hand side does not depend on the shape or location of $H_x$ and is only a function of the surface area of the infinitesimal segment. As for the left-hand side, using the computation done in the proof of the Theorem shows
 $$ \int_{L} n_{\ell}( H_x) n_{\ell}(\partial \Omega) d\mu(\ell) = (1+o(1))  \int_{\partial \Omega}  \frac{\left|\left\langle w, y - x \right\rangle  \left\langle  y - x, n(y) \right\rangle   \right| }{\|x -y\|^{n+1}}~d\sigma(y)$$
where the error term is with respect to the diameter of $H_x$ shrinking to 0.
\end{proof}

\end{document}